\newtheorem{theorem}{Theorem}[section]
\newtheorem{conjecture}[theorem]{Conjecture}
\newtheorem{corollary}[theorem]{Corollary}
\newtheorem{definition}[theorem]{Definition}
\newtheorem{lemma}[theorem]{Lemma}
\newenvironment{proof}[1][Proof]{\noindent\textbf{#1: }}{\hspace{\stretch{1}}\rule{0.5em}{0.5em}}
\begin{document}

\begin{frontmatter}

\title{A Proof of Erd\"{o}s - Faber - Lov\'{a}sz Conjecture
\footnote{This paper is presented in the 23$^{rd}$ International Conference of Forum for Interdisciplinary Mathematics (FIM) on ``Interdisciplinary Mathematical, Statistical and Computational Techniques - 2014'' organized by the NITK, Surathkal, INDIA, from 18/12/2014 to 20/12/2014.}}

\author{Suresh M. H.}
\ead{smhegde@nitk.ac.in}
\author{ V. V. P. R. V. B. Suresh Dara\corref{mycorrespondingauthor}}
\ead{suresh.dara@gmail.com}
\address{Department of Mathematical and Computational Sciences,\\
National Institute of Technology Karnataka, Surathkal,\\
Mangalore - 575025, India.}
\cortext[mycorrespondingauthor]{Corresponding author}

\begin{abstract}
In 1972, Erd\"{o}s - Faber - Lov\'{a}sz (EFL) conjectured that, if $\textbf{H}$ is a linear hypergraph consisting of $n$ edges of cardinality $n$, then it is possible to color the vertices with $n$ colors so that no two vertices with the same color are in the same edge. In 1978, Deza, Erd\"{o}s and  Frankl had given an equivalent version of the same for graphs: Let $G= \bigcup _{i=1}^{n} A_i$ denote a graph with $n$ complete graphs $A_1, A_2,$ $ \dots , A_n$, each having exactly $n$ vertices and have the property that every pair of complete graphs has at most one common vertex, then the chromatic number of $G$ is $n$. 

The clique degree $d^K(v)$ of a vertex $v$ in $G$ is given by $d^K(v) = |\{A_i: v \in V(A_i), 1 \leq i \leq n\}|$. 
In this paper we give an algorithmic proof of the conjecture using the symmetric latin squares and clique degrees of the vertices of $G$.
\end{abstract}

\begin{keyword}
\texttt Chromatic number \sep Erd\"{o}s - Faber - Lov\'{a}sz conjecture \sep Latin squares
\MSC[2010]  05A05 \sep 05B15  \sep 05C15
\end{keyword}

\end{frontmatter}


\section{Introduction}

One of the famous conjectures in graph theory is Erd\"{o}s - Faber - Lov\'{a}sz conjecture. It states that if $\textbf{H}$ is a linear hypergraph consisting of $n$ edges of cardinality $n$, then it is possible to color the vertices of $\textbf{H}$ with $n$ colors so that no two vertices with the same color are in the same edge \cite{berge1990onvizing}. Erd\"{o}s, in 1975, offered 50 pounds \cite{erdHos1975problems, arroyo2008dense} and in 1981, offered 500USD \cite{erdHos1981combinatorial, jensen2011graph} for the proof or disproof of the conjecture. Kahn \cite{kahn1992coloring} showed that the chromatic number of $\textbf{H}$ is at most $n+o(n)$. Jakson et al.  \cite{jackson2007note} proved that the conjecture is true when the partial hypergraph $S$ of $\textbf{H}$ determined by the edges of size at least three can be $\Delta_S$-edge-colored and satisfies $\Delta_S \leq 3$. In particular, the conjecture holds when $S$ is unimodular and $\Delta_S \leq 3$. Viji Paul et al. \cite{vijipaul2012dmsaa} established the truth of the conjecture for all linear hypergraphs on $n$ vertices with $\Delta(\textbf{H}) \leq \sqrt{n + \sqrt{n} +1}$. Sanhez - Arrayo \cite{arroyo2008dense} proved the conjecture for dense hypergraphs. Faber \cite{faber2010uniformregular} proves that for fixed degree, there can be only finitely many counterexamples to EFL on this class (both regular and uniform) of hypergraphs. We consider the equivalent version of the conjecture for graphs given by Deza, Erd\"{o}s and  Frankl in 1978 \cite{deza1978intersection, arroyo2008dense, jensen2011graph, mitchem2010arscombin}.


\begin{conjecture}\label{EFL}
Let $G= \bigcup _{i=1}^{n} A_i$ denote a graph with $n$ complete graphs $(A_1, A_2,$ $ \dots , A_n)$, each having exactly $n$ vertices and have the property that every pair of complete graphs has at most one common vertex, then the chromatic number of $G$ is $n$.
\end{conjecture}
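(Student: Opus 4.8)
The plan is to bound $\chi(G)$ from below and above separately. The lower bound $\chi(G)\geq n$ is immediate: each $A_i$ is a copy of $K_n$, so any proper coloring must use at least $n$ distinct colors on its vertices. The entire burden is therefore to exhibit a proper coloring of $G$ with exactly $n$ colors $\{1,2,\dots,n\}$, which would force $\chi(G)=n$.

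The organizing idea is to split $V(G)$ according to clique degree. Call a vertex $v$ \emph{shared} if $d^K(v)\geq 2$ and \emph{private} if $d^K(v)=1$. A private vertex lies in a unique clique $A_i$ and is adjacent only to the other vertices of that clique, so it imposes no constraint elsewhere in $G$. Consequently, if the shared vertices of each $A_i$ have already received pairwise distinct colors, say occupying a set of $s_i$ colors, then the $n-s_i$ private vertices of $A_i$ can be assigned, bijectively, the $n-s_i$ colors of $\{1,\dots,n\}$ not yet used inside $A_i$; since $|A_i|=n$ the counts always match. Thus the whole problem reduces to coloring the shared vertices so that, within every clique, no color repeats.

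To color the shared vertices I would use a symmetric Latin square $L=(L_{ij})$ of order $n$ on the symbol set $\{1,\dots,n\}$ (for instance a symmetrized Cayley table of a suitable group). The structural fact driving the construction, and a consequence of the hypothesis that any two cliques meet in at most one vertex, is that each shared vertex $v$ determines the set $S_v=\{i : v\in V(A_i)\}$ of cliques through it, and any two shared vertices satisfy $|S_{v}\cap S_{w}|\leq 1$. For a shared vertex with $d^K(v)=2$, say $S_v=\{i,j\}$, I assign the color $L_{ij}$; symmetry of $L$ makes this well defined, and the Latin property of row $i$ guarantees that the degree-two shared vertices seen from $A_i$ receive distinct colors as $j$ ranges over the cliques meeting $A_i$. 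This already settles every instance in which all shared vertices have clique degree at most two.

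The hard part, and the step I expect to be the main obstacle, is reconciling vertices of clique degree at least three with the Latin square. A vertex $v$ with $S_v=\{i_1,\dots,i_t\}$ and $t\geq 3$ must receive a single color that simultaneously serves as the shared color of $A_{i_1},\dots,A_{i_t}$, which a symmetric Latin square cannot encode through one off-diagonal entry. My plan is therefore to order the shared vertices by non-increasing clique degree and color them in that order, using the relation $|S_v\cap S_w|\leq 1$ to control, at the moment each vertex is colored, how many colors are already forbidden inside the cliques of $S_v$, and to invoke the symmetric Latin square only to organize the numerous remaining degree-two vertices once the high-degree ones are fixed. The delicate point is to show that after the high-degree vertices have reserved their colors, the residual degree-two assignment can still be realized by a symmetric Latin square that avoids the reserved symbols in the affected cells; proving that such a constrained symmetric completion always exists, equivalently that no clique ever exhausts its budget of $n$ colors, is the crux on which the whole argument stands or falls.
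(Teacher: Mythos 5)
Your lower bound, your handling of the private (clique degree one) vertices, and your Latin-square coloring of the degree-two shared vertices are all sound, and they coincide with the paper's own skeleton: Lemma~\ref{l1} colors the extremal graph $H_n$ (where every shared vertex has clique degree exactly two) by precisely your rule, assigning to the vertex in $A_i \cap A_j$ the $(i,j)$ entry of a symmetric Latin square, and Theorem~\ref{t7} extends the coloring to clique-degree-one vertices greedily, exactly as you propose. The genuine gap is the step you yourself flag as ``the crux on which the whole argument stands or falls'': you never actually color the vertices of clique degree at least three, nor prove that after such vertices reserve their colors the residual degree-two cells admit a constrained symmetric completion. That statement is not a technical loose end --- it is where the entire difficulty of the conjecture lives, since the case in which all shared vertices have clique degree at most two is easy (as your own argument shows). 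Moreover, the naive version of your plan does not work: ordering shared vertices by non-increasing clique degree and coloring greedily does not keep the forbidden set below $n$, because a vertex $v$ with $S_v=\{i_1,\dots,i_t\}$ can see on the order of $t(n-1)$ previously colored neighbours spread over its $t$ cliques, and the relation $|S_v \cap S_w|\leq 1$ alone does not prevent the union of colors already used in those cliques from exhausting $\{1,\dots,n\}$. Some global recoloring mechanism is needed, not local counting.

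For comparison, this missing step is exactly the technical content of the paper's proof of Theorem~\ref{t7}: for each vertex $u_{i_1,\dots,i_m}$ of clique degree $m\geq 3$ it selects the $\binom{m}{2}$ cells $(i_s,i_t)$ of the color matrix and performs global symbol transpositions (Steps 1--4 of its construction) to force all of those cells to carry a single common color, with Step 2 repairing entries whose symbol repeats in the relevant row or column, and Steps 5--6 afterwards repairing the cells of the remaining degree-two vertices; cells corresponding to pairs with $A_i \cap A_j = \emptyset$ are zeroed out and effectively returned to the pool of free symbols for row $i$ and column $j$. Whatever one thinks of the correctness of that algorithm, it is the whole point of the paper, and it is precisely the statement your proposal reduces the conjecture to and then leaves unproved. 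As written, your argument establishes the conjecture only in the special case $\Delta^K(G)\leq 2$, so the proposal is incomplete.
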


%


\begin{definition}\label{d1}
Let $G= \bigcup _{i=1}^{n} A_i$ denote a graph with $n$ complete graphs $A_1, A_2,$ $ \dots , A_n$, each having exactly $n$ vertices and the property that every pair of complete graphs has at most one common vertex. The clique degree $d^K(G)$ of a vertex $v$ in $G$ is given by $d^K(v) = |\{A_i: v \in V(A_i), 1 \leq i \leq n\}|$. The maximum clique degree $\Delta ^K(G)$ of the graph $G$ is given by $\Delta ^K(G)=max_{v\in V(G)}d^K(v)$.
\end{definition}


From the above definition one can observe that degree of a vertex in hypergraph is same as the clique degree of a vertex in a graph.

\begin{definition}\label{d3}
Let $G_1$ and $G_2$ be two vertex disjoint graphs, and let $x_1, x_2$
be two vertices of $G_1, G_2$ respectively. Then, the graph $G(x_1x_2)$ obtained by merging the vertices $x_1$ and $x_2$ into a single vertex is called the concatenation of $G_1$ and $G_2$ at the points $x_1$ and $x_2$ (see \cite{kundu1980reconstruction}).
\end{definition}

\begin{definition}\label{d2}
A \textit{latin square} is an $n \times n$ array containing $n$ different symbols such that each symbol appears exactly once in each row and once in each column. Moreover, a latin square of order $n$ is an $n \times n$ matrix $M=[m_{ij}]$ with
entries from an $n$-set $V=\{1, 2, \dots, n\}$, where every row and every column is a permutation of $V$ (see \cite{laywine1998discrete}). If the matrix $M$ is symmetric, then the latin square is called \textit{symmetric latin square}.
\end{definition}

\section{Results}

 We know that a symmetric $n \times n$ matrix is determined by $\frac{n(n + 1)}{2}$ scalars. Using symmetric latin squares we give an $n$-coloring of $H_n$ constructed below. Then using the $n$-coloring of $H_n$ we give an $n$-coloring of all the other graphs $G$ satisfying the hypothesis of Conjecture \ref{EFL}.


\begin{description}
  \item[] \textbf{Construction of $H_n$:}

Let $n$ be a positive integer and $B_1, B_2, \dots, B_n$ be $n$ copies of $K_n$. Let the vertex set $V(B_i)=\{a_{i,1}, a_{i,2}, a_{i,3}, \dots, a_{i,n}\}$, $1 \leq i \leq n$.
  \item[Step 1.] Let $H^1=B_1$.
  \item[Step 2.] Consider the vertices $a_{1,2}$ of $H^1$ and $a_{2,1}$ of $B_2$. Let $b_{1,2}$ be the vertex obtained by the concatenation of the vertices $a_{1,2}$ and $a_{2,1}$. Let the resultant graph be $H^2$.
  \item[Step 3.] Consider the vertices $a_{1,3}$, $a_{2,3}$ of $H^2$ and $a_{3,1}$, $a_{3,2}$ of $B_3$. Let $b_{1,3}$ be the vertex obtained by the concatenation of  vertices $a_{1,3}$, $a_{3,1}$ and let $b_{2,3}$ be the vertex obtained by the concatenation of vertices $a_{2,3}$, $a_{3,2}$. Let the resultant graph be $H^3$.
    \item[]Continuing in the similar way, at the $n$th step we obtain the graph $H^n = H_n$ (for the sake of convenience we take $H^n$ as $H_n$).
\end{description}


By the construction of $H_n$ one can observe the following:

\begin{enumerate}
\item $H_n$ is a connected graph and satisfying the hypothesis of Conjecture \ref{EFL}.
\item $H_n$ has exactly $n$ verticies of clique degree one and $\frac{n(n-1)}{2}$ vertices of clique degree $2$ (each $B_i$ has exactly $(n-1)$ vertices of clique degree $2$ and one vertex of clique degree one, $1 \leq i \leq n$).
\item $H_n=\bigcup_{i=1}^{n}B_i$, where $B_i = A_i$ and $B_i$, $B_j$ have exactly one common vertex for $1\leq i < j \leq n$.
\item $H_n$ has exactly $\frac{n(n+1)}{2}$ vertices.
\item One can observe that in a connected graph $G$ if clique degree increases the number of vertices also increases, from this it follows that, $H_n$ is the graph with minimum number of vertices satisfying the hypothesis of Conjecture \ref{EFL}. If all the vertices of $G$ are of clique degree one, then $G$ will have $n^2$ vertices. Thus, $\frac{n(n+1)}{2} \leq |V(G)| \leq n^2$.
\end{enumerate}



\begin{lemma}\label{ll2}
If $G$ is a graph satisfying the hypothesis of Conjecture \ref{EFL}, then $G$ can be obtained from $H_n$ for some $n$ in $\mathbb{N}$.
\end{lemma}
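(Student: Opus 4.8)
The plan is to realize every admissible $G$ by applying splitting operations (reverse concatenations) and concatenations to $H_n$, using the totally split configuration as an intermediate. Identify the $n$ cliques $A_1,\dots,A_n$ of $G$ with the $n$ copies $B_1,\dots,B_n$ of $K_n$ as abstract complete graphs, and let $D_n$ denote their disjoint union $\bigsqcup_{i=1}^n K_n$, the configuration in which every clique degree equals one and $|V(D_n)| = n^2$.

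First I would observe that $D_n$ is itself obtained from $H_n$. By the construction of $H_n$ each of its $\binom{n}{2}$ shared vertices $b_{i,j}$ has clique degree two, so reversing the corresponding concatenation of Definition~\ref{d3} --- splitting $b_{i,j}$ into a vertex of $B_i$ together with a vertex of $B_j$ --- and doing this for all pairs transforms $H_n$ into $D_n$. It therefore suffices to produce an arbitrary $G$ from $D_n$ by concatenations alone.

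Next I would encode $G$ by its sharing pattern: for $v \in V(G)$ set $S(v) = \{\, i : v \in V(A_i)\,\}$. The hypothesis that any two cliques meet in at most one vertex says exactly that every pair $\{i,j\}$ is contained in $S(v)$ for at most one $v$, so the sets $\{\,S(v) : d^K(v) \ge 2\,\}$ form a partial linear system on $\{1,\dots,n\}$. I would then build $G$ from $D_n$ by handling one sharing set $S$ at a time: pick an as-yet-unused vertex from each copy $A_i$ with $i \in S$ and concatenate these $|S|$ vertices into one vertex of clique degree $|S|$. Distinct sharing sets through a fixed index $i$ intersect only in $\{i\}$ (otherwise some pair would lie in two of them), hence they call for distinct vertices of the $i$-th copy; since there are at most $n-1$ such sets and $A_i$ has exactly $n$ vertices, the supply always meets the demand, and the vertices never used stay as the clique-degree-one vertices of $G$. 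A routine check confirms that the resulting graph is $G$ and still satisfies the at-most-one-common-vertex condition.

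I expect the vertices of clique degree at least three to be the main obstacle, and they are precisely what forces the detour through $D_n$ rather than a direct manipulation of $H_n$. A clique-degree-$d$ vertex of $G$ in $A_{i_1},\dots,A_{i_d}$ corresponds in $H_n$ to the $\binom{d}{2}$ distinct shared vertices $b_{i_p,i_q}$, and any two of these sharing a common index sit in a common clique of $H_n$; they cannot be merged without dropping that clique below $n$ vertices. Splitting down to $D_n$ first erases these spurious coincidences, and the only delicate point that remains is the counting argument above guaranteeing that each clique copy supplies enough distinct vertices, which is underwritten by the bound $\tfrac{n(n+1)}{2} \le |V(G)| \le n^2$.
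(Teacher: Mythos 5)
Your proof is correct, but it routes through a different intermediate object than the paper does. The paper never passes through the disjoint union $D_n$: it splits only those shared vertices $b_{i,j}$ of $H_n$ that do not correspond to clique-degree-two vertices of $G$ (those not in $N_2$), and then, for each vertex of $G$ with sharing set $x=\{l_1< l_2 < \dots< l_m\}$, $m\geq 3$, merges a cyclically chosen system of split halves $u_{l_1,l_2}, u_{l_2,l_3}, \dots, u_{l_{m-1},l_m}, u_{l_m,l_1}$ (exactly one half from each of the $m$ cliques) into a single vertex $u_x$; it finishes by exhibiting an explicit isomorphism $f\colon G\to G'$, justified by the count $|V(A_i')|=|V(B_i')|$. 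Your normalization through $D_n$ buys uniformity: a single operation (concatenate one fresh vertex per clique of the sharing set) handles clique degree $2$ and clique degree $\geq 3$ alike, with no cyclic-selection bookkeeping, and your supply-demand count (at most $n-1$ sharing sets through any index $i$, since by linearity distinct sets through $i$ meet only in $\{i\}$, against the $n$ available vertices of $A_i$) makes explicit what the paper merely asserts via $|V(A_i')|=|V(B_i')|$. What the paper's more economical surgery buys is the explicit labeled correspondence ($b_{i,j}$, $u_x$) that it reuses in the coloring argument of Theorem \ref{t7}. One side remark of yours is slightly off: the clique-degree-$\geq 3$ vertices do not actually force the full detour through $D_n$, since the paper's cyclic selection shows it suffices to split just the $\binom{m}{2}$ relevant vertices $b_{i_p,i_q}$ and merge one half per clique; also, your counting argument is underwritten by linearity alone, not by the bound $\frac{n(n+1)}{2}\leq |V(G)|\leq n^2$.
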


\begin{proof}
Let $G$ be a graph satisfying the hypothesis of Conjecture \ref{EFL}. Let $b_x$ be the new labeling to the vertices $v$ of clique degree greater than one in $G$, where $x=\{i$\ : vertex $v$ is in $A_i\}$. Define $N_i=\{b_x:|x|=i\}$ for $i=2, 3, \dots, n$. Then the graph $G$ is constructed from $H_n$ as given below:

\textbf{Step 1:} For every common vertex $b_{i,j}$ in $H_n$ which is not in $N_2$, split the vertex $b_{i,j}$ into two vertices $u_{i,j},u_{j,i}$ such that vertex $u_{i,j}$ is adjacent only to the vertices of $B_i$ and the vertex $u_{j,i}$ is adjacent only to the vertices of $B_j$ in $H_n$. 

\textbf{Step 2:} For every vertex $b_x$ in $N_i$ where $i=3, 4, \dots, n$, merge the vertices $u_{l_1,l_2}, u_{l_2,l_3}, \dots, u_{l_{m-1},l_m}, u_{l_m,l_1}$ into a single vertex $u_x$ in $H_n$ where $l_i\in x$ and $l_i < l_j$ for $i<j$. 

Let $G'$ be the graph obtained in Step 2. Let $V(B_i')$, $V(A_i')$ be the set of all clique degree 1 vertices of $B_i$ of $G'$, $A_i$ of $G$ respectively, $1\leq i \leq n$. 
Thus by splitting all the common vertices of $H_n$ which are not in $N_2$ and merging the vertices of $H_n$ corresponding to the vertices in $N_i, i\geq 3$, we get the graph $G'$. One can observe that $|V(A_i')|=|V(B_i')|$, $1 \leq i \leq n$. Define a function $f: V(G) \rightarrow V(G')$ by

\begin{align*}
f(b_{i,j}) 	& 	=  b_{i,j}	 & 	\text{ for } b_{i,j} \in N_2\\
f(b_{i_1,i_2, \dots i_k}) &	=  u_{i_1,i_2, \dots i_k} 	& \text{ for } b_{ii_1,i_2, \dots i_k} \in \cup_{i=3}^n N_i\\
f|_{V(A_i')} 	& 	=  g_i 	 & (\text{any 1-1 map } g_i:V(A_i') \rightarrow V(B_i')), \text{ for }1\leq i \leq n
\end{align*}

One can observe that $f$ is an isomorphism from $G$ to $G'$.
\end{proof}

From Lemma \ref{ll2}, one can observe that in $G$ there are at most ${\frac{n(n-1)}{2}}$ common vertices.


Let $G$ be the graph satisfying the hypothesis of Conjecture \ref{EFL}. Let $\hat{H}$ be the graph obtained by removing the vertices of clique degree one from graph $G$. i.e. $\hat{H}$ is the induced subgraph of $G$ having all the common vertices of $G$.


\begin{lemma}\label{l1}
The chromatic number of $H_n$ is $n$.
\end{lemma}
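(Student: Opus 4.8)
The plan is to prove the two inequalities $\chi(H_n) \ge n$ and $\chi(H_n) \le n$ separately. The lower bound is immediate: by construction $H_n$ contains $B_1 = K_n$ as a subgraph, and a clique on $n$ vertices already requires $n$ colors, so $\chi(H_n) \ge n$. All the work therefore goes into exhibiting a proper coloring with exactly $n$ colors, and this is where the symmetric latin square enters.

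For the upper bound I would first record the exact incidence structure coming from the construction: the vertices $a_{i,j}$ and $a_{j,i}$ were concatenated into a single common vertex $b_{i,j}=b_{j,i}$ for $i\neq j$, while each $a_{i,i}$ remains a private (clique degree one) vertex of $B_i$. Thus the vertex set of $H_n$ is in natural bijection with the entries on and above the diagonal of an $n\times n$ symmetric matrix: the $\tfrac{n(n-1)}{2}$ common vertices correspond to the off-diagonal positions and the $n$ private vertices to the diagonal positions, matching the count $\tfrac{n(n+1)}{2}=|V(H_n)|$. Now fix any symmetric latin square $M=[m_{i,j}]$ of order $n$ (Definition \ref{d2}); such a square always exists, for instance the Cayley table $m_{i,j}\equiv i+j \pmod n$. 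Define a coloring $c$ of $H_n$ by $c(b_{i,j})=m_{i,j}$ for the common vertices and $c(a_{i,i})=m_{i,i}$ for the private vertices.

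The key point --- and the reason symmetry is needed --- is that $c$ is well defined: the vertex $b_{i,j}$ belongs to both $B_i$ and $B_j$, and reading its color from either clique gives $m_{i,j}$ or $m_{j,i}$, which coincide precisely because $M$ is symmetric. I would then check that $c$ is proper. Every edge of $H_n$ lies inside some clique $B_i$, so it suffices to verify that the $n$ vertices of each $B_i$ receive distinct colors. The colors appearing on $B_i$ are exactly $\{m_{i,j} : 1 \le j \le n\}$, namely the whole $i$th row of $M$, which is a permutation of $\{1,\dots,n\}$ by the latin square property; hence the $n$ vertices of $B_i$ all receive distinct colors and $c$ uses exactly $n$ colors. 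Combining with the lower bound gives $\chi(H_n)=n$.

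I do not expect a serious obstacle here. The only subtlety worth flagging is that the diagonal entries of a symmetric latin square need not be distinct (for example when $n$ is even), so several private vertices may share a color; this is harmless, since private vertices of distinct cliques are non-adjacent. The essential content is the observation that symmetry of $M$ is exactly what makes the two cliques sharing a vertex agree on its color, turning the per-row permutation property into a single global proper coloring.
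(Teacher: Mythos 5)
Your proof is correct and follows essentially the same approach as the paper: index the vertices of $H_n$ by a symmetric $n\times n$ matrix and color them by the entries of a symmetric latin square, so that row $i$ properly colors $B_i$ (the paper obtains its symmetric latin square via an edge coloring of $K_n$ following Bryant and Rodger, whereas you take any symmetric latin square such as the Cayley table of $\mathbb{Z}_n$, an inessential difference). If anything, your write-up is more complete, since you make explicit the well-definedness of the coloring at shared vertices, the properness check clique by clique, and the trivial lower bound $\chi(H_n)\ge n$, all of which the paper leaves implicit.
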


\begin{proof}
Let $H_n$ be the graph defined as above. Let $M$ (given below) be an $n \times n$ matrix in which an entry $m_{i,j}=b _{i,j}$, is a vertex of $H_n$, belonging to both $B_i, B_j$ for $i \neq j$ and $m_{ii} = a_{i,i}$ is the vertex of $H_n$ which belongs to $B_i$. i.e.,

M=$\left(
  \begin{array}{ccccc}
    a_{1,1} & b_{1,2} & b_{1,3} & \dots & b_{1,n} \\
    b_{1,2} & a_{2,2} & b_{2,3} & \dots & b_{2,n} \\
    b_{1,3} & b_{2,3} & a_{3,3} & \dots & b_{3,n} \\
    \vdots & \vdots & \vdots & \ddots & \vdots \\
    b_{1,n} & b_{2,n} & b_{3,n} & \dots & a_{n,n} \\
  \end{array}
\right)$

\noindent Clearly $M$ is a symmetric matrix. We know that, for every $n$ in $\mathbb{N}$ there is a symmetric latin square (see \cite{ye2011number}) of order $n \times n$. Bryant and Rodger \cite{bryant2004completion} gave a necessary and sufficient condition for the existence of an $(n-1)$-edge coloring of $K_n$ (n even), and $n$-edge coloring of $K_n$ (n odd) using symmetric latin squares. Let $v_1, v_2, \dots, v_n$ be the vertices of $K_n$ and $e_{ij}$ is the edge joining the vertices $v_i$ and $v_j$ of $K_n$, where $i<j$, then arrange the edges of $K_n$ in the matrix form $A = [a_{i,j}]$ where $a_{i,j} = e_{i,j}$, $a_{j,i} = e_{i,j}$ for $ i<j$ and $a_{i,i} =0$ for $1 \leq i \leq n$, we have
$A=\left(
  \begin{array}{ccccc}
    0 & e_{1,2} & e_{1,3} & \dots & e_{1,n} \\
    e_{1,2} & 0 & e_{2,3} & \dots & e_{2,n} \\
    e_{1,3} & e_{2,3} & 0 & \dots & e_{3,n} \\
    \vdots & \vdots & \vdots & \ddots & \vdots \\
    e_{1,n} & e_{2,n} & e_{3,n} & \dots & 0 \\
  \end{array}
\right)$

and let $V$ is a matrix given by

$V=\left(
  \begin{array}{ccccc}
    v_1 & 0 & 0 & \dots & 0 \\
    0 & v_2 & 0 & \dots & 0 \\
    0 & 0 & v_3 & \dots & 0 \\
    \vdots & \vdots & \vdots & \ddots & \vdots \\
    0 & 0 & 0 & \dots & v_n \\
  \end{array}
\right)$. Then, define a matrix $A'$ as

$A' = A+V = \left(
  \begin{array}{ccccc}
    v_1 & e_{1,2} & e_{1,3} & \dots & e_{1,n} \\
    e_{1,2} & v_2 & e_{2,3} & \dots & e_{2,n} \\
    e_{1,3} & e_{2,3} & v_3 & \dots & e_{3,n} \\
    \vdots & \vdots & \vdots & \ddots & \vdots \\
    e_{1,n} & e_{2,n} & e_{3,n} & \dots & v_n \\
  \end{array}
\right)$.

Let $C=[c_{i,j}]$ be a matrix where $c_{i,j}$ ($i\neq j$), is the color of $e_{i,j}$ (i.e., $c_{i,j}=c(e_{i,j})$) and $c_{i,i}$ is the color of $v_i$. We call $C$ the color matrix of $A'$. Then $C$ is the symmetric latin square (see\cite{bryant2004completion}). As the elements of $M$ are the vertices of $H_n$, one can assign the colors to the vertices of $H_n$ from the color matrix $C$, by the color $c_{i,j}$ (where $c_{i,j}$ denotes the value at the $(i, j)$-th entry in the color matrix $C$), for $ i, j =1, 2, \dots,n$ and $i \neq j$ to the vertex $b_{i,j}$ in $H_n$ and the color $c_{i,i}$  (where $c_{i,i}$ denotes the value at the $(i, i)$-th entry in the color matrix $C$), for $i = 1, 2, \dots n$ to the vertex $a_{i,i}$ in $H_n$. Hence $H_n$ is $n$ colorable.
\end{proof}

As $H_n$ is the graph satisfying the hypothesis of Conjecture \ref{EFL}. With using the coloring of $H_n$ which is the graph satisfying the hypothesis of Conjecture \ref{EFL} we extend the $n$-coloring of all possible graphs $G$ satisfying the hypothesis of Conjecture \ref{EFL}.

\begin{theorem}\label{t7}
If $G$ is a graph satisfying the hypothesis of Conjecture \ref{EFL}, then $G$ is $n$-colorable.
\end{theorem}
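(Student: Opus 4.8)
The plan is to combine the two lemmas already in hand. By Lemma~\ref{ll2} every admissible $G$ can be realized, up to isomorphism, from the canonical graph $H_n$ by the splitting and merging operations used in that proof, and by Lemma~\ref{l1} the graph $H_n$ carries an explicit proper $n$-coloring $c$ read off from the color matrix $C=[c_{i,j}]$ of a symmetric latin square. I would therefore fix this coloring of $H_n$ and follow what each operation does to it, so as to manufacture a proper $n$-coloring of the graph $G'\cong G$ produced in Step~2 of Lemma~\ref{ll2}.

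The splitting of Step~1 is harmless: when a common vertex $b_{i,j}\notin N_2$ is split into $u_{i,j}$ (adjacent only to $B_i$) and $u_{j,i}$ (adjacent only to $B_j$), I keep the old color $c_{i,j}$ on both copies; since each copy now meets only one of the two cliques, every clique retains an all-distinct coloring and the split graph is still properly $n$-colored. The extension to the clique-degree-one vertices is equally painless and is best done last: each such vertex lies in a single clique $A_k\cong K_n$, hence has at most $n-1$ already-colored neighbours and can be assigned a free color greedily. Consequently the whole problem collapses to coloring the common vertices, i.e.\ to producing a proper $n$-coloring of $\hat H$ in which, for every clique, its common vertices receive pairwise distinct colors.

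The crux is the merging of Step~2. Merging the cyclically arranged vertices $u_{l_1,l_2},u_{l_2,l_3},\dots,u_{l_m,l_1}$ into a single vertex $u_x$ forces them to carry one common color, whereas under $c$ they hold the a priori distinct entries $c_{l_1,l_2},\dots,c_{l_m,l_1}$; the inherited coloring therefore cannot be copied verbatim, and one must instead select a single color for each $u_x$ that is admissible in all of $B_{l_1},\dots,B_{l_m}$ simultaneously. Here I would lean on the defining ``linear'' property of $G$: since two cliques meet in at most one vertex, no pair $\{i,j\}$ lies in two different index sets, so the sets $x$ form a partial Steiner system on $\{1,\dots,n\}$. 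Exploiting the symmetry $c_{i,j}=c_{j,i}$ and the fact that every row of $C$ is a permutation of the colors, the natural attempt is to color $u_x$ by the entry $c_{l_1,l_2}$ attached to the two smallest indices of $x$; the linear hypothesis then guarantees, in particular, that $u_x$ never clashes with a surviving degree-two vertex $b_{l_1,t}$ of $B_{l_1}$, because $b_{l_1,l_2}$ cannot belong to $N_2$ once $l_1,l_2\in x$.

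I expect the main obstacle to be showing that this one global rule is consistent not only against the retained $N_2$-vertices but also between two distinct high-clique-degree vertices $u_x,u_{x'}$ that happen to share a clique $B_k$: one must rule out the coincidence $c_{l_1,l_2}=c_{l_1',l_2'}$ in every such row, and verify that the columns left over for the private vertices still suffice. This amounts to proving that the latin-square coloring of the complete structure underlying $H_n$ can be restricted and re-amalgamated onto the sparser intersection structure of $G$ without spending an extra color, and it is precisely the point where the interaction between symmetric latin squares and the partial Steiner structure of the common vertices must be controlled most carefully.
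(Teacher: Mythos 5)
There is a genuine gap, and you have in fact pointed at it yourself: the consistency problem you defer in your last paragraph is not a technical loose end but the entire content of the theorem. Your static rule --- color each merged vertex $u_x$ by the entry $c_{l_1,l_2}$ of the two smallest indices of $x$ --- protects only the row $l_1$: within $B_{l_1}$ the latin property guarantees $c_{l_1,l_2}$ occurs at no other position of row $l_1$, and $b_{l_1,l_2}\notin N_2$ since $\{l_1,l_2\}\subset x$. But $u_x$ also sits in the cliques $B_{l_3},\dots,B_{l_m}$, and the color $c_{l_1,l_2}$ is an entry from \emph{outside} rows $l_3,\dots,l_m$, so nothing prevents a collision there. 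Concretely: since row $l_3$ of the latin square is a permutation of all $n$ colors, the color $c_{l_1,l_2}$ appears in row $l_3$ at some position $c_{l_3,t}$; if $b_{l_3,t}$ survives in $G$ as a clique-degree-two vertex (which the hypothesis of Conjecture \ref{EFL} in no way forbids), then $u_x$ and $b_{l_3,t}$ lie in the same clique $A_{l_3}$ with the same color. The same mechanism produces clashes between two merged vertices $u_x$, $u_{x'}$ sharing a clique whose index is not among the two smallest of both sets. The partial-Steiner structure you invoke rules out repeated \emph{pairs}, but it places no constraint at all on coincidences between latin-square \emph{entries} in different rows, which is where the conflicts live; so no fixed global selection rule of this kind can be verified from linearity alone.

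This is exactly where the paper's proof diverges from yours: instead of choosing colors by a static rule, it \emph{modifies the matrix}. It first zeroes the entries $c_{i,j}$ for all pairs with $A_i\cap A_j=\emptyset$ (the set $X$), which frees colors in the affected rows --- slack your proposal never exploits. Then it runs an iterative repair algorithm (Steps 1--6): for each high-clique-degree vertex $u_{i_1,\dots,i_m}$ it equalizes all $\binom{m}{2}$ entries $c_{i_p,i_q}$ by swapping two color classes $x\leftrightarrow y$ throughout the matrix (Step 4, with already-fixed positions in $P'$ shielded from the swap), using freed colors from $\{1,\dots,n\}\setminus(A\cup B)$ when an entry's color is not unique in its row or column (Step 2); Steps 5--6 then repair the remaining degree-two vertices whose colors have become non-unique in their row or column. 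The swaps preserve the property that no color repeats within a row, which is what keeps each clique properly colored. Your reduction skeleton --- Lemma \ref{ll2} to pass from $H_n$ to $G$, Lemma \ref{l1} for the base coloring, splitting handled trivially, clique-degree-one vertices colored greedily last since each has at most $n-1$ colored neighbours --- matches the paper exactly; what is missing is this dynamic recoloring mechanism, and without it (or some substitute argument) the proposal does not prove the theorem.
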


\begin{proof}
Let $G$ be a graph satisfying the hypothesis of Conjecture \ref{EFL}. Let $\hat{H}$ be the induced subgraph of $G$ consisting of the vertices of clique degree greater than one in $G$. For every vertex $v$ of clique degree greater than one in $G$, label the vertex $v$ by $u_A$ where $A=\{i:v\in A_i; i =1, 2, \dots, n\}$.
Define
$X=\{b_{i,j}:A_i \cap A_j = \emptyset \}$, $X_i=\{v\in G: d^K(v)=i\}$ for $i=1, 2, \dots, m$.

Let $1, 2, \dots, n$ be the $n$-colors and $C$ be the color matrix( of size $n \times n$) as defined in the proof of Lemma \ref{l1}. The following construction applied on the color matrix $C$, gives a modified color matrix $C_M$, using which we assign the colors to the graph $\hat{H}$. Then this coloring can be extended to the graph $G$. Construct a new color matrix $C_1$ by putting $c_{i,j} = 0, c_{j,i} = 0$ for every $b_{i,j}$ in $X$. Also, let $c_{i,i} = 0$ for each $i=1,2,\dots,n$

\textbf{Construction:}

Let $T=\cup_{i=3}^{n}X_i$, $P= \emptyset$, $T'' = X_2$ and $P'' = \emptyset$.

\begin{description}

    \item[Step 1:]
If $T = \emptyset$, let $C_m$ be the color matrix obtained in Step 4 and go to Step 5. Otherwise, choose a vertex $u_{i_1,i_2, \dots, i_m}$ from $T$, where $i_1 <i_2 < \dots < i_m$, and then choose $\binom{m}{2}$ vertices $b_{i_1,i_2}$, $b_{i_1,i_3}$, $\dots$, $b_{i_1,i_m}$, $b_{i_2,i_3}$, $\dots$, $b_{i_{m-1},i_m}$ from $V(H_n)$ corresponding to the set $\{i_1, i_2,  \dots, i_m\}$. Take $T' = \{b_{i_1,i_2}, b_{i_1,i_3},\dots, b_{i_1,i_m}, b_{i_2,i_3}, \dots, b_{i_{m-1},i_{m}}\}$ and $P' = \emptyset$. Let $T_1' = \{b_{i,j}: b_{i,j} \in T',  c(b_{i,j})$ appears more than once in the $ i^{th}$ row or $ j^{th}$ column in $ C \}$ and $T_2' = \{ b_{i,j} : b_{i,j} \in T', c(b_{i,j}) $ appears exactly once in the $ i^{th}$ row and $ j^{th}$ column in $ C \}$. If $T_1' \neq \emptyset$ choose a vertex $b_{s,t}$ from $T_1'$, otherwise choose a vertex $b_{s,t}$ from $T_2'$. Then add the vertex $b_{s,t}$ to $P'$ and remove it from $T'$. Go to Step 2.

\item[Step 2:] If $T_2' \neq \emptyset$ go to Step 3. Otherwise, choose a vertex $b_{i_{m-1},i_m}$ from $T_1'$. Let $A=\{c_{i,j}: c_{i,j} \neq 0; i = i_{m-1}, 1\leq j \leq n\}$, $B=\{c_{i,j} : c_{i,j} \neq 0 ; j =i_m, 1 \leq i \leq n\}$. 
If $|A\cap B| <n$ then, 
construct a new color matrix $C_2$, replacing $c_{i_{m-1},i_m}$, $c_{i_m,i_{m-1}}$ by $x$, where $x \in \{1, 2, \dots, n\} \setminus A\cup B$. Then add the vertex $b_{i_{m-1},i_m}$ to $T_2'$ and remove it from $T_1'$. Go to Step 3.
Otherwise choose a color $x$ which appears exactly once either in $i_{m-1}^{th}$ row or in $i_m^{th}$ column of the color matrix and construct a new color matrix $C_2$ replacing $c_{i_{m-1},i_m}$, $c_{i_m,i_{m-1}}$ by $x$. Then add the vertex $b_{i_{m-1},i_m}$ to $T_2'$ and remove it from $T_1'$. Go to Step 3.

 \item[Step 3:] If $T' = \emptyset$, then add the vertex $u_{i_1,i_2, \dots, i_m}$ to $P$ and remove it from $T$, go to Step 1.  Otherwise, if $T' \cap T_1' \neq \emptyset$ choose a vertex  $b_{i,j}$ from $T' \cap T_1'$, if not choose a vertex  $b_{i,j}$ from $T' \cap T_2'$. Go to Step 4.

 \item[Step 4:] Let $c(b_{i,j})=x,\  c(b_{s,t})=y$. If $c(b_{i,j})=c(b_{s,t})$, then add the vertex $b_{i,j}$ to $P'$ and remove it from $T'$. Go to Step 3. Otherwise, let $A=\{ c_{l,m} : c_{l,m} = x\}$, $B=\{c_{l,m} : c_{l,m}=y\} \setminus \{c_{l,m}, c_{m,l}: b_{l,m} \in P', l<m\}$. Construct a new color matrix $C_3$ by putting $c_{l,m} = y$ for every $c_{l,m}$ in $A$ and $c_{l,m}=x$ for every $c_{l,m}$ in B. Then add the vertex $b_{i,j}$ to $P'$ and remove it from $T'$. Go to Step 3.

  \item[Step 5:] If $T'' = \emptyset$ consider $C_M = C_{m_1}$ stop the process. Otherwise, choose a vertex $u_{i,j}$ from $T''$ and go to Step 6.
  
  \item[Step 6:] If $c_{i,j}$ appears exactly once in both $i^{th}$ row and $j^{th}$ column of the color matrix $C_m$, then add the vertex $b_{i,j}$ to $P''$ and remove it from $T''$, go to Step 5. Otherwise let $A=\{c_{i,j}: c_{i,j} \neq 0; 1\leq j \leq n\}$, $B=\{c_{i,j} : c_{i,j} \neq 0 ; 1 \leq i \leq n\}$. Construct a new color matrix $C_{m_1}$ by putting $x$ in $c_{i,j}$, $c_{j,i}$ where $x \in \{1, 2, \dots, n\} \setminus A\cup B$. Then add the vertex $u_{i,j}$ to $P''$ and remove it from $T''$, go to Step 5.

\end{description}

Thus, in step 6, we get the modified color matrix $C_M$.
Then, color the vertex $v$ of $\hat{H}$ by $c_{i,j}$ of $C_M$, whenever $v \in A_i \cap A_j$. Then, extend the coloring of $\hat{H}$ to $G$ by assigning the remaining colors which are not used for $A_i$ from the set of $n$-colors, to the vertices of clique degree one in $A_i$, $ 1 \leq i \leq n$. Thus $G$ is $n$-colorable.
\end{proof}

\begin{corollary}\cite{arroyo2008dense}
Consider a linear hypergraph $\textbf{H}$ consisting of $n$ edges each of size at most $n$ and $\delta(\textbf{H}) \geq 2$. If $H$ is dense then $\chi(\textbf{H}) \leq n$.
\end{corollary}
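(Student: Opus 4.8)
The plan is to deduce the corollary directly from Theorem~\ref{t7} by realizing the hypergraph $\textbf{H}$ as a spanning subgraph of a graph meeting the hypothesis of Conjecture~\ref{EFL}. First I would pass to the representing graph $G_{\textbf{H}}$ of $\textbf{H}$: take $V(G_{\textbf{H}}) = V(\textbf{H})$, and for each edge $e_i$ of $\textbf{H}$ place a complete graph $A_i'$ on the vertices lying in $e_i$. Since $\textbf{H}$ is linear, any two edges meet in at most one vertex, so any two of the cliques $A_i', A_j'$ share at most one vertex; moreover a coloring of $V(\textbf{H})$ in which no edge carries a repeated color is exactly a proper vertex coloring of $G_{\textbf{H}}$. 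Hence $\chi(\textbf{H}) = \chi(G_{\textbf{H}})$, and it suffices to bound the latter.

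Next I would pad the cliques up to size $n$. By hypothesis $\textbf{H}$ has $n$ edges, each of size at most $n$, so $G_{\textbf{H}} = \bigcup_{i=1}^{n} A_i'$ with $|V(A_i')| \le n$. For each $i$, adjoin $n - |V(A_i')|$ fresh private vertices (of clique degree one) to $A_i'$ to form a clique $A_i$ on exactly $n$ vertices, and set $G^* = \bigcup_{i=1}^{n} A_i$. The adjoined vertices are pairwise distinct and each belongs to a single clique, so they create no new intersections and $A_i \cap A_j = A_i' \cap A_j'$ still has at most one vertex for $i \ne j$. Thus $G^*$ is a graph with $n$ cliques, each of exactly $n$ vertices, pairwise sharing at most one vertex, i.e. $G^*$ satisfies the hypothesis of Conjecture~\ref{EFL}.

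Finally, Theorem~\ref{t7} furnishes a proper $n$-coloring of $G^*$. Restricting this coloring to $V(G_{\textbf{H}}) \subseteq V(G^*)$ yields a proper $n$-coloring of $G_{\textbf{H}}$, and therefore $\chi(\textbf{H}) = \chi(G_{\textbf{H}}) \le \chi(G^*) \le n$, as claimed.

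Since the statement is a corollary, I expect no serious obstacle: the argument is essentially a mechanical reduction. The only points requiring care are that the padding step preserves \emph{both} the exact clique size $n$ and the at-most-one-common-vertex property (so that we genuinely land inside the scope of Theorem~\ref{t7}), and that restriction of a proper coloring remains proper. The density and $\delta(\textbf{H}) \ge 2$ hypotheses are inherited from the source~\cite{arroyo2008dense}; they guarantee that $\textbf{H}$ is presented as a linear hypergraph with $n$ edges of size at most $n$, which is all the reduction uses, so in fact the bound $\chi(\textbf{H}) \le n$ follows for every such hypergraph.
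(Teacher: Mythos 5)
Your reduction is correct: padding each clique $A_i'$ with private vertices preserves the at-most-one-common-vertex property and lands exactly in the hypothesis of Theorem~\ref{t7}, and restricting the resulting $n$-coloring gives $\chi(\textbf{H}) \le n$. The paper states this corollary without any proof (merely citing \cite{arroyo2008dense}), and your argument is precisely the intended mechanical deduction from Theorem~\ref{t7}; your closing observation that the density and $\delta(\textbf{H}) \ge 2$ hypotheses play no role in the reduction is also accurate.
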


\section*{References}

\bibliography{mydatabase}

\end{document}